\newtheorem{theorem}{Theorem}
\newtheorem{corollary}{Corollary}
\begin{document}

\title{A note on harmonic functions on surfaces}
\markright{Harmonic functions on surfaces}
\author{Jean C. Cortissoz}

\maketitle

\begin{abstract}
We review and give elementary
proofs of Liouville type properties of harmonic and subharmonic
functions in the plane endowed with a complete Riemannian metric, and
 prove a gap theorem for the possible growth of harmonic functions 
when this metric has nonnegative Gaussian curvature.
\end{abstract}

\section{Introduction.}

Given a function $u:\Omega\longrightarrow \mathbb{R}$, where $\Omega$ is an open subset of  $\mathbb{R}^2$,
the Laplacian is defined, in rectangular coordinates, as 
\[
\Delta u = \frac{\partial^2 u}{\partial x^2}+\frac{\partial^2 u}{\partial y^2},
\] 
and we say that $u$ is harmonic (resp. subharmonic) if $\Delta u =0$ ($\Delta u\geq 0$).
The classical Liouville's Theorem in $\mathbb{R}^2$ states that a bounded harmonic function is constant
(For a beautiful proof of this fact we recommend \cite{Nelson}). A stronger
version says that if $u$ is a subharmonic function bounded above then it must be constant; we refer
to this property of the plane as parabolicity, and, no doubt, it is an amazing fact that  
being a solution to a partial differential identity or a partial differential inequality may determine the growth properties of a function. 

Over the years, the analysis studied on $\mathbb{R}^n$ has been carried over to Riemannian manifolds,
a realm where a differential and an inner product structure coexist.
In particular,
a Laplacian operator acting over functions can be defined, and hence it is a framework 
in which the concept of
harmonic, subharmonic and superharmonic function have a natural extension. 
So it is also natural to ask which properties of harmonic functions,
such as Liouville's Theorem, or parabolicity 
are preserved in a Riemannian manifold. 

In this paper we are interested in Liouville type theorems and gap theorems
on surfaces with a pole (i.e., surfaces where polar coordinates can be defined). By a Liouville type property 
we mean a theorem that states that if a harmonic function is conveniently bounded then it must be constant,
and by a gap theorem we mean a theorem that imposes restriction on how fast a harmonic function must
grow so that it does not belong to a class of strictly lower growth. 

Before starting to throw definitions formulas
and theorems at the reader, let us mention some interesting results related to the work we will present in this paper.
Regarding Liouville type theorems, of 
great importance are the results of
Ahlfors and Milnor, which in the case of surfaces endowed with a rotationally symmetric
metric relates
an intrinsic quantitity, the curvature,
to the behavior of subharmonic functions, and to be more precise
to the parabolicity of the surface. Green and Wu (\cite{GreeneWu}) extended 
the Ahlfors-Milnor theorem to the case of surfaces with a pole.
We will give a relatively simple proof of part of the Ahlfors-Milnor-Greene-Wu parabolicity criterion, 
in which our main tool will be the Strong Maximum Principle.

Regarding gap properties for harmonic functions, on the classical side, that is,
in the complex plane, it is well known that if the rate of growth of a harmonic function is bounded by
a power of the distance to a fixed point, then it must be a polynomial. This is a consequence
of the analiticity of harmonic functions in $\mathbb{R}^2$ and of the Cauchy estimates. 
A most recent result has been proved by Ni and Tam in \cite{NiTam}: Here
the authors show how fast, in a K\"ahler manifold
of positive bisectional curvature, a superlinear harmonic function must grow. As a treat 
for the reader, we improve upon Ni and Tam's result 
in the case of a surface with a pole: This is the only new result in this paper 
(at least to the best of our knowledge). 

The main ideas of our proofs are contained in, and someone could
even say they are transplanted, via a clasical comparison
theorem due to Sturm, from the beautiful book ``Maximum Principles in Differential Equations'' 
\cite{ProtterWeinberger}, which we highly recommend.
 We also hope that this note serves as
an introduction, assuming as little as possible prerequisites from the reader, to the study of harmonic
functions in Riemannian geometry.

\section{preliminaries.}

For the convenience of the reader, let us give a quick review of a few concepts in Riemannian geometry
that we shall be using in what follows. 
We will consider $\mathbb{R}^2$ and fix polar coordinates
$\left(r,\theta\right)$ with respect to the origin, and we will endow it with a family
 of inner products of the form
\[
g=dr^2+\left(f\left(r,\theta\right)\right)^2d\theta^2,
\]
where $f:\left(0,\infty\right)\times \left[0,2\pi\right]\longrightarrow \left(0,\infty\right)$ 
is a smooth function such that 
\[
f\left(r,0\right)=f\left(r,2\pi\right),\quad
\lim_{r\rightarrow 0^+} f\left(r,\theta\right)=0\quad \mbox{and} \lim_{r\rightarrow 0^+}f'\left(r,\theta\right)=1,
\]
where we have used (and will use in what follows) $f'$ to denote differentiation with respect to $r$.

For those not familiar with Riemannian manifolds, $g$ represents a way of measuring vectors, and 
it is called a Riemannian metric. It defines
an inner product for vectors 
based at the point $\left(r,\theta\right)$, and represented in the
basis
\[
\frac{\partial}{\partial r}=\frac{x}{\sqrt{x^2+y^2}}\mathbf{i}+\frac{y}{\sqrt{x^2+y^2}}\mathbf{j},
\quad 
\frac{\partial}{\partial \theta} = y\mathbf{i}-x\mathbf{j}
\] 
in the following way. If we have 
\[
\mathbf{v_j}=a_j\frac{\partial}{\partial r}+b_j\frac{\partial}{\partial \theta},
\]
based at the point $\left(r,\theta\right)$ (the point $\left(r\cos \theta,r\sin\theta\right)$
in rectangular coordinates),
then
\[
g\left(\mathbf{v_1},\mathbf{v_2}\right)=a_1a_2+b_1b_2\left(f\left(r,\theta\right)\right)^2.
\]
Notice that with the choice $f\left(r,\theta\right)=r$ we obtain the usual inner product of vectors in the plane.
The pair $\left(M,g\right)$ is called a Riemannian surface (as opposed to a Riemann surface), and,
as discovered by Gauss, Riemannian surfaces have an important
intrinsic estimate: the curvature. 
From the expression for a Riemannian metric given above, the curvature can be computed as
\[
K_{g}\left(r,\theta\right)=-\frac{f''\left(r,\theta\right)}{f\left(r,\theta\right)}.
\]
In a Riemannian surface the gradient of a function $u:M\longrightarrow \mathbb{R}$ can be defined, 
due to the fact that given a nondegenerate scalar product a metric
dual of the derivative of a function can be defined. In our case then, the 
gradient of $u$ can be computed
as
\[
\nabla_g u = \frac{\partial u}{\partial r}\frac{\partial}{\partial r} + \frac{1}{f^2}\frac{\partial u}{\partial \theta}\frac{\partial}{\partial \theta}.
\]
We can also define a Laplacian, which is the operator of our utmost interest:
\[
\Delta_g = \frac{\partial^2}{\partial r^2}+\frac{f'}{f}\frac{\partial}{\partial r}+\frac{1}{f}\frac{\partial^2}{\partial \theta^2}
-\frac{f_{\theta}}{f^3}\frac{\partial}{\partial \theta},
\]
and here $f_{\theta}$ denotes $\displaystyle \frac{\partial f}{\partial \theta}$.

Given a Laplacian, we can define
a $C^2$ function $u$ as harmonic if $\Delta_g u=0$, subharmonic if $\Delta_g u\geq 0$ and superharmonic if
$\Delta_g u\leq 0$.

The term $h_g:=\frac{f'}{f}$ in the expression for the 
Laplacian is rather important for the following discussion. It gives the mean curvature of the circle of
radius $r$ with respect to the metric $g_0$. We will need to estimate this term, and the tool we will employ
is the following  comparison, due to Sturm, and which is known as the Laplacian Comparison Theorem
among geometers.
\begin{theorem}
Let $h, f: \left[a,\infty\right)\longrightarrow \left(0,\infty\right)$ be $C^2$ functions. If
$\displaystyle \frac{f''}{f}\left(r\right)\leq \frac{h''}{h}\left(r\right)$, $r>a$, and
$\displaystyle\frac{f'}{f}\left(a\right)\leq\frac{h'}{h}\left(a\right)$ then 
$\displaystyle\frac{f'}{f}\left(r\right)\leq\frac{h'}{h}\left(r\right)$ for $r>a$.
\end{theorem}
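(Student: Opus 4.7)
The natural plan is to reduce the inequality between logarithmic derivatives to a monotonicity statement about the Wronskian-like quantity
\[
W(r) = f'(r)h(r) - f(r)h'(r).
\]
Observe that dividing the desired conclusion $f'/f \leq h'/h$ by the positive product $fh$ (and then multiplying back) is equivalent to showing $W(r)\leq 0$ for all $r\geq a$, and similarly the hypothesis $f'(a)/f(a)\leq h'(a)/h(a)$ is exactly $W(a)\leq 0$. So the whole theorem collapses to proving that $W$ is nonincreasing.

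First I would differentiate: a direct computation gives
\[
W'(r) = f''(r)h(r) - f(r)h''(r).
\]
The curvature-type assumption $f''/f \leq h''/h$, together with positivity of $f$ and $h$, rearranges to $f''h \leq f h''$, i.e.\ $W'(r)\leq 0$ on $(a,\infty)$. Combined with $W(a)\leq 0$, integration yields $W(r)\leq 0$ for all $r\geq a$, which is precisely the claim.

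I do not expect any serious obstacle here: the argument is essentially the standard Sturm--Picone trick, and the positivity of $f$ and $h$ throughout $[a,\infty)$ (which is part of the hypothesis) is exactly what allows us to multiply and divide by $fh$ without worrying about sign changes or zeros. The only delicate point worth stating carefully is that the inequality $f''h\leq fh''$ really does follow from $f''/f\leq h''/h$ only because both $f$ and $h$ are positive; if one allowed $f$ or $h$ to vanish, the comparison could fail at those points. Since this is ruled out by hypothesis, the proof is a one-line Wronskian monotonicity argument.
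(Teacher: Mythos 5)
Your Wronskian argument is correct and complete: with $W=f'h-fh'$ you have $W'=f''h-fh''\le 0$ on $(a,\infty)$ by the curvature hypothesis (after multiplying $f''/f\le h''/h$ by the positive quantity $fh$), and $W(a)\le 0$ is exactly the hypothesis at $r=a$, so $W\le 0$ everywhere and dividing by $fh>0$ gives the conclusion. This is, however, a genuinely different route from the paper's. The paper works with the Riccati equation satisfied by the logarithmic derivatives, namely $\left(\frac{w'}{w}\right)'=-\left(\frac{w'}{w}\right)^2+\frac{w''}{w}$, and observes that whenever $\frac{f'}{f}$ and $\frac{h'}{h}$ are equal at a point, the hypothesis forces $\left(\frac{f'}{f}\right)'\le\left(\frac{h'}{h}\right)'$ there; the conclusion then follows from a first-crossing (ODE comparison) argument, which the paper only sketches and which, if written out, requires a little care at points of equality (typically an $\varepsilon$-perturbation or a Gronwall-type step). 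Your version sidesteps that delicacy entirely: monotonicity of $W$ plus the sign at $a$ is a one-line, fully rigorous argument, at the price of using the linear second-order structure of $f$ and $h$ directly. The paper's Riccati viewpoint, on the other hand, is the formulation that generalizes to settings where one only controls the logarithmic derivative itself (as in the higher-dimensional Laplacian comparison), which is presumably why the author phrases it that way. Both proofs rely on the standing positivity of $f$ and $h$, as you correctly emphasize.
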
  

The proof of this theorem is based upon the following observation:
\[
\left(\frac{w'}{w}\right)'=-\left(\frac{w'}{w}\right)^2+\frac{w''}{w}.
\]
Notice then that the hypothesis imply that 
$\displaystyle\left(\frac{f'}{f}\left(r\right)\right)'\leq\left(\frac{h'}{h}\left(r\right)\right)'$
whenever $\displaystyle \frac{f'}{f}\left(r\right)=\frac{h'}{h}\left(r\right)$.

Another important tool in the arguments that follow is the Maximum Principle.

\begin{theorem}
Let $\Omega$ be a bounded open subset of 
$\mathbb{R}^2$,
$\mathbb{R}^2$ endowed with
a Riemannian metric $g$, and 
let $u:\overline{\Omega}\longrightarrow \mathbb{R}$ be a 
$C^2\left(\Omega\right)\cap C\left(\overline{\Omega}\right)$ function.
Then we have that:

\noindent
(i) if $\Delta_g u\leq 0$ and $u$ has a minimum in the interior of $\Omega$ then $u$ 
is constant.

\noindent
(ii) if $\Delta u_g \geq 0$ and $u$ has a maximum in the interior of $\Omega$ then
$u$ is constant.
\end{theorem}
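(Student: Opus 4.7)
\emph{Plan of proof.} Replacing $u$ by $-u$ interchanges (i) and (ii), so it suffices to prove (ii). I would follow the classical two-stage Hopf proof of the strong maximum principle, adapted to the elliptic operator $\Delta_g$ as expressed in polar coordinates.

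\emph{Stage 1 (strict case).} First, suppose $w\in C^2(\Omega)$ satisfies the strict inequality $\Delta_g w>0$ on $\Omega$ and attains an interior maximum at a point $p_0$. Working in polar coordinates (or, should $p_0$ be the pole $r=0$, in a Euclidean chart in which $g$ extends smoothly by the prescribed behavior of $f$ at the origin), the first derivatives of $w$ vanish at $p_0$ and the Hessian is negative semidefinite. Substituting $w_r(p_0)=w_\theta(p_0)=0$, $w_{rr}(p_0)\le 0$, $w_{\theta\theta}(p_0)\le 0$ into the displayed formula for $\Delta_g$ gives $\Delta_g w(p_0)\le 0$, contradicting the strict inequality.

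\emph{Stage 2 (Hopf boundary point lemma).} The heart of the argument is this: given a closed Euclidean disc $\overline D=\{|x-x_0|\le R\}$ strictly inside $\Omega$ and bounded away from the pole, if $u$ is subharmonic on $D$ and $u(x)<u(p_1)$ for all $x\in D$ and some $p_1\in\partial D$, then the outward normal derivative of $u$ at $p_1$ is strictly positive. I would prove this using the barrier $v(x)=e^{-\alpha|x-x_0|^2}-e^{-\alpha R^2}$ on the annulus $A=\{R/2<|x-x_0|<R\}$. A direct computation of $\Delta_g v$ in polar coordinates shows that for $\alpha$ large enough, in terms of the coefficients $f'/f$ and $f_\theta/f^3$ (which are continuous, hence bounded, on the compact annulus $\overline A$ once the pole is excluded), one has $\Delta_g v>0$ on $A$. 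Then $u+\varepsilon v$ is strictly subharmonic, equals $u$ on the outer circle, and for sufficiently small $\varepsilon>0$ is at most $u(p_1)$ on the inner circle because $u$ is strictly less than $u(p_1)$ there. Stage 1 applied to $u+\varepsilon v-u(p_1)$ forces this bound throughout $A$, and differentiating at $p_1$ yields the desired strict sign on the normal derivative of $u$.

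\emph{Stage 3 (globalization).} Suppose for contradiction that $u$ is nonconstant but attains its maximum $M$ at some interior point. Then $\{u<M\}$ is a nonempty proper open subset of $\Omega$. A standard point-picking argument produces a $q\in\{u<M\}$ with $\mathrm{dist}(q,\{u=M\})<\mathrm{dist}(q,\partial\Omega)$; the largest Euclidean disc $D$ around $q$ contained in $\{u<M\}$ then touches $\{u=M\}$ at some point $p_1$ interior to $\Omega$. After shrinking $D$ slightly to avoid the pole if needed, Stage 2 gives a strictly positive outward normal derivative of $u$ at $p_1$, contradicting the vanishing of $\nabla_g u$ at the interior maximum $p_1$. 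The principal obstacle is Stage 2: one must verify that the first-order coefficients of $\Delta_g$ can be controlled well enough to make $\Delta_g v>0$ on $A$. This is in the end only a uniform continuity statement on a compact annulus away from the pole, so the argument remains elementary, in keeping with the spirit of the paper.
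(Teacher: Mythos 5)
Your outline is essentially correct, but note first that the paper does not prove this theorem at all: it disposes of it in one line by citing the Strong Maximum Principle for elliptic operators (Theorem 3.5 in Gilbarg--Trudinger). What you have written is precisely a sketch of the classical Hopf proof of that cited result -- the strict-inequality case, the exponential barrier and boundary-point lemma, and the connectedness/globalization step -- so you are supplying the argument that the paper outsources; the two routes agree in substance, yours being self-contained and the paper's being a reduction to the literature. One loose end in your sketch deserves attention: the apparent singularity of $\Delta_g$ at the pole is purely a coordinate artifact. In rectangular coordinates the Laplace--Beltrami operator of the smooth metric $g$ is uniformly elliptic with continuous coefficients on every compact subset of $\mathbb{R}^2$, the origin included, so the barrier computation of your Stage 2 goes through there verbatim; by contrast, your device of ``shrinking $D$ slightly to avoid the pole'' in Stage 3 does not cover the exceptional case in which the touching point $p_1$ is the pole itself. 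The remark you already make in Stage 1 (pass to a Euclidean chart at $r=0$, where $g$ extends smoothly) repairs this immediately, and it is cleaner to run the whole argument in Cartesian coordinates on compact subsets rather than in the polar expression for $\Delta_g$. With that adjustment -- and the tacit assumption that $\Omega$ (or at least the component containing the interior extremum) is connected, an assumption the paper's statement also leaves implicit -- your proof is complete and matches the standard argument behind the paper's citation.
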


This theorem follows from the Strong Maximum Principle for elliptic operators 
(See Section 3.2 in \cite{GilbargTrudinger}, in particular Theorem 3.5).

\section{Bounded harmonic functions.}

Before we state the main result of
this section, we shall make an observation. Let $z:\left[a,\infty\right)\longrightarrow \left(0,\infty\right)$
be a $C^2$ function.
Direct calculation shows that the function
\[
h\left(r\right)=\int_a^r\frac{1}{z\left(\sigma\right)}\,d\sigma
\]
satisfies the identity
\[
\frac{d^2 h}{dr^2}+\frac{z'\left(r\right)}{z\left(r\right)}\frac{d h}{dr}=0,
\]
i.e, it is harmonic with respect to the metric
\[
g_0=dr^2+z\left(r\right)^2 d\theta^2,
\]
defined on the complement of the open ball of radius $a>0$. 

Hence if $g=dr^2+\left(f\left(r,\theta\right)\right)^2d\theta^2$ is a metric on $\mathbb{R}^2$ such that
$\displaystyle K_{g}\left(r,\theta\right)\geq -\frac{z''}{z}\left(r\right)$, then $\Delta_g h\geq 0$.
Indeed, notice that $\displaystyle \lim_{r\rightarrow a^+}\frac{z'}{z}\left(r\right)=\infty$, and hence
for $a_0$ close to $a$,
$\displaystyle \frac{f'}{f}\left(a\right)\leq \frac{z'}{z}\left(a\right)$,
so 
the hypothesis of Sturm's comparison theorem hold, and
 we have that $\displaystyle \frac{f'}{f}\left(r\right)\leq \frac{z'}{z}\left(r\right)$, for $r\geq a$. Therefore,
\[
\Delta_g h = \frac{d^2 h}{dr^2}+\frac{f'}{f}\frac{d h}{dr}
=\frac{d^2 h}{dr^2}+\frac{f'}{f}\frac{1}{z}\leq 
\frac{d^2 h}{dr^2}+\frac{z'}{z}\frac{1}{z}=0.
\]
Given a continuous function $v$ define
\[
M\left(v;r\right)=\sup_{\theta\in\left[0,2\pi\right)} \left|v\left(r,\theta\right)\right|.
\]
We are ready to show the following result.  
\begin{theorem}
\label{maintheorem}
Let $g=dr^2+f\left(r,\theta\right)^2d\theta^2$ be a metric on $\mathbb{R}^2$. Let $z$ be as above,
and assume  that $K_g\left(r,\theta\right)\geq -\frac{z''}{z}\left(r\right)$ for $r$ large enough. Then,
any subharmonic function $u$ which satisfies
\[
\liminf_{r\rightarrow\infty} \frac{M\left(u;r\right)}{h\left(r\right)}=0
\]
must be constant.
\end{theorem}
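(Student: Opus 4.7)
The plan is to use $h$ itself as a one-sided barrier, run an annular maximum-principle argument, and close by the strong maximum principle.

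First, I would recall the computation immediately preceding the statement: Sturm's theorem together with the curvature hypothesis yields $f'/f \le z'/z$ for $r$ large, so that $\Delta_g h \le 0$ on an exterior region $\{r \ge a_0\}$ for $a_0$ chosen sufficiently large. Consequently, for each $\epsilon > 0$ the function $v_\epsilon := u - \epsilon h$ is $g$-subharmonic on this region (as a sum of a subharmonic function and $-\epsilon$ times a superharmonic one).

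Second, using the hypothesis pick a sequence $R_n \to \infty$ with $M(u;R_n)/h(R_n) \to 0$, and apply Theorem~2 to $v_\epsilon$ on the annulus $\{a_0 \le r \le R_n\}$. Writing $M^+(u;r) := \sup_\theta u(r,\theta)$, the maximum principle gives
\[
\sup_{a_0 \le r \le R_n} v_\epsilon \le \max\bigl(M^+(u;a_0) - \epsilon h(a_0),\; M^+(u;R_n) - \epsilon h(R_n)\bigr).
\]
Since $|M^+(u;r)| \le M(u;r)$, the hypothesis also forces $M^+(u;R_n)/h(R_n) \to 0$; hence the second term on the right tends to $-\infty$ and is dominated by the first for $n$ large. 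Letting $n \to \infty$ and then $\epsilon \to 0$ yields the pointwise bound
\[
u(r,\theta) \le M^+(u;a_0) \qquad \text{for all } r \ge a_0.
\]

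Third, the maximum principle applied to $u$ on the disk $\{r \le a_0\}$ gives $\sup_{\{r \le a_0\}} u = M^+(u;a_0)$, and combining with the estimate above yields $\sup_{\mathbb{R}^2} u \le M^+(u;a_0)$. Since the reverse inequality is trivial, the supremum of $u$ is attained at some point of the compact circle $\{r = a_0\}$. The strong maximum principle then forces $u$ to be constant on $\mathbb{R}^2$.

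The main subtlety I anticipate is the decision to carry the \emph{signed} supremum $M^+(u;r) = \sup_\theta u(r,\theta)$ through the argument, rather than the absolute version $M(u;r)$, on the outer boundary of the annulus. A cruder estimate using $M$ there would only give $u \le M(u;a_0)$, which is not enough: $M(u;a_0)$ can strictly exceed $\sup u$ whenever $u$ attains very negative values on $\{r = a_0\}$, and then the final step (attainment of $\sup u$) would fail. Using $M^+$ is precisely what makes the supremum of $u$ attained on a compact set, thereby unlocking the strong maximum principle.
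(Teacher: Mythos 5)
Your argument is correct and is essentially the paper's own proof: the same barrier $u-\epsilon h$ (built from $\Delta_g h\le 0$, obtained via Sturm's comparison from the curvature hypothesis), the same maximum-principle estimate on annuli $a_0\le r\le R_n$ along the $\liminf$ sequence, the same limits $n\to\infty$ and $\epsilon\to 0$, and the same closing appeal to the strong maximum principle. Your one deviation --- carrying the one-sided supremum $M^+(u;a_0)$ rather than the paper's $M(u;R_1)=\sup_\theta|u|$ --- is a small sharpening that makes the final attainment-of-maximum step cleaner than in the paper's write-up; note only that your domination step (the outer boundary term tending to $-\infty$) tacitly uses $h(r)\to\infty$, which indeed holds for the intended choices of $z$ and is needed for the theorem in any case.
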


\begin{proof}
 In 
what follows we will denote by $B_R$ the ball of radius $R$ centered
at $\left(0,0\right)$. Fix $R_1>0$, fix  $\delta>0$ also small, and define the function
\[
w_{\delta, \eta}=u-\delta h\left(r\right)-M\left(u;R_1\right).
\]
By hypothesis, we can take $R_2$ larger than $R_1$ and such that
$w_{\delta,\eta}\leq 0$ on both $\partial B_{R_1}$ and $\partial B_{R_2}$. It is also clear 
from its definition that
$\Delta_g w_{\delta,\eta}\geq 0$.
From the Maximum Principle 
it follows that
\[
w_{\delta,\eta}\leq 0
\]
on the annulus of inner radius $R_1$ and outer radius $R_2$. Since $\delta>0$ can be made arbitraryly small, we conclude that for all
$P$ in the annulus the estimate 
\[
\left|u\left(P\right)\right|\leq M\left(u;R_1\right)
\]
holds, and hence that $\left|u\right|\leq M_{R_1}$  on the ball of  radius $R_2$, and from this via the Maximum Principle 
(as $u$ attains its maximum at an interior point of $B_{R_2}$) 
we can conclude that $u$ is constant in the ball of radius $R_2$. Since $R_2$ can be taken arbitrarily large, the
theorem follows.

\end{proof}

The proof given above is presented in \cite{ProtterWeinberger} in the case of a flat metric 
(See Theorem 19 in Section 2 of \cite{ProtterWeinberger} and the example thereafter), 
but its generalization
to other metrics is straightforward. Besides,
the previous theorem has the following interesting consequence:
\begin{corollary}
Let $u$ be a harmonic function on a surface such that for $r\geq r_0\geq 1$ its
curvature function satisfies $K_g\geq -\frac{1}{r^2\log r}$. If
\[
\liminf_{r\rightarrow \infty}\frac{M\left(u;r\right)}{\log\log r}=0
\]
then $u$ is constant. In particular, if $u$ is bounded then it is constant.
\end{corollary}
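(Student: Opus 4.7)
The plan is to apply Theorem \ref{maintheorem} to $u$ (harmonic, hence subharmonic) with the explicit choice of model $z(r) = r\log r$. Direct differentiation gives $z'(r) = 1 + \log r$ and $z''(r) = 1/r$, so
\[
\frac{z''(r)}{z(r)} = \frac{1}{r^2\log r},
\]
which matches the curvature bound exactly: the hypothesis $K_g \geq -1/(r^2\log r)$ for $r \geq r_0$ becomes precisely $K_g \geq -z''/z$ for $r$ large, as required. Moreover $z(1)=0$ and $\lim_{r\to 1^+} z'(r)/z(r) = +\infty$, so $z$ fits the framework set up before Theorem \ref{maintheorem}.

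The associated model-harmonic function is
\[
h(r) = \int_a^r \frac{d\sigma}{\sigma\log\sigma} = \log\log r - \log\log a
\]
for any $a>1$, so $h(r)/\log\log r \to 1$ as $r\to\infty$. Consequently, the hypothesis $\liminf_{r\to\infty} M(u;r)/\log\log r = 0$ is equivalent to $\liminf_{r\to\infty} M(u;r)/h(r) = 0$, and Theorem \ref{maintheorem} directly yields that $u$ is constant. For the ``in particular'' clause, if $u$ is bounded then $M(u;r)$ is uniformly bounded in $r$, so $M(u;r)/\log\log r \to 0$ as $r\to\infty$, trivially satisfying the liminf hypothesis.

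The only real content here is spotting $z(r) = r\log r$ as the solution of $z'' = z/(r^2\log r)$ whose reciprocal integrates to $\log\log r$; once this model is chosen, Theorem \ref{maintheorem} does all the work. I expect the main technical wrinkle, when $r_0 > 1$, to be that $z$ vanishes at $1$ rather than at $r_0$, so one must check that the Sturm comparison behind Theorem \ref{maintheorem} can still be initialized on $[r_0,\infty)$; this is not a serious obstacle, since one may alternatively take $z$ to be the solution of $z'' = z/(r^2\log r)$ on $[r_0,\infty)$ vanishing at $r_0$, which has the same asymptotics $z \sim C r\log r$ as $r\to\infty$ and gives $h$ of the same $\log\log r$ order.
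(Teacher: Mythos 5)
Your proposal is correct and is essentially the paper's own argument: apply Theorem \ref{maintheorem} (with $u$ harmonic, hence subharmonic) to a comparison function $z$ with $-z''/z\leq K_g$ for $r\geq r_0$ and with $h$ growing like $\log\log r$. The wrinkle you flag for $r_0>1$ is handled in the paper more cleanly than by your ODE patch (whose asymptotics $z\sim Cr\log r$ you assert but do not verify): one simply takes the explicit choice $z(r)=r\log\left(r/r_0\right)$, which vanishes at $r_0$ so that $z'/z\rightarrow+\infty$ there and the Sturm comparison initializes at $r_0$, satisfies $-z''/z=-1/\left(r^2\log\left(r/r_0\right)\right)\leq -1/\left(r^2\log r\right)\leq K_g$ because $r_0\geq 1$, and still yields $h(r)=\log\log\left(r/r_0\right)+O(1)$, so the hypothesis of Theorem \ref{maintheorem} follows exactly as in your argument.
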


This corollary is due to Greene and Wu (Theorem D in \cite{GreeneWu}),
and its proof is quite simple: take $\displaystyle z\left(r\right)= r\log \left(\frac{r}{r_0}\right)$ for $r\geq r_0$.
Notice that, since $r_0\geq 1$, then 
\[
-\frac{z''}{z}=-\frac{1}{r^2\log\left(\frac{r}{r_0}\right)}\leq -\frac{1}{r^2\log r}\leq K_g.
\]
On the other hand, $\lim_{r\rightarrow r_0^+} \frac{z'}{z}\left(r\right)=+\infty$, and hence by Sturm's comparison theorem,
$\Delta_g h\geq 0$, with $h$ as defined above; by Theorem \ref{maintheorem} the result follows.

We want to point out that in higher dimensions, Liouville's Theorem has been extended by Yau, via his gradient estimate (see below), to manifolds
of nonnegative Ricci curvature. 

\subsection{On the Ahlfors-Milnor-Greene-Wu parabolicity criterion: an application of  Hadamard's three circles theorem.}

A surface $M$ is called parabolic if any subharmonic function (i.e. $\Delta u\geq 0$) bounded above is constant.
Milnor in \cite{Milnor} showed that given a rotationally symmetric metric on $\mathbb{R}^2$, it is
parabolic if for large enough $r$ the curvature of the metric is larger than or equal to
$\displaystyle -\frac{1}{r^2\log r}$.

We use the Maximum Principle to give a proof of a generalization
of Milnor's criterion (which is a theorem due to Greene and Wu).
We assume that we have endowed $\mathbb{R}^2$ with a metric of the
form $\displaystyle dr^2+\left(f\left(r,\theta\right)\right)^2d\theta^2$ whose
curvature is larger than or equal to $\displaystyle -\frac{1}{r^2\log r}$.

First, we have the following version of Hadamard's three circles theorem. Let $u$ be a subharmonic function, and let 
\[
M_r=\max_{\theta\in \left[0,2\pi\right)}u\left(r,\theta\right).
\]
Notice that if $r_2>r_1$ then $M_{r_2}\geq M_{r_1}$, by the Maximum Principle. So let $r_1<r<r_2$, and define
\[
\varphi\left(r\right)=\frac{1}{\log\left(\frac{\log r_2}{\log r_1}\right)}
\left(M_{r_1}\log\left(\frac{\log r_2}{\log r}\right)+M_{r_2}\log\left(\frac{\log r}{\log r_1}\right)\right).
\]
It is easy to check that $\Delta \varphi \leq 0$, and hence $u-\varphi$ is subharmonic, and also
$u-\varphi\leq 0$ on both $\partial B_1$ and $\partial B_2$. We can conclude via the Maximum Principle
that
\[
M_r\leq \varphi\left(r\right).
\]
This last inequality is our version of Hadamard's three circles theorem. Now assume that $u$ is bounded above.
By taking  
$r_2\rightarrow \infty$, we obtain the estimate
\[
M_r\leq M_{r_1}.
\]
But then, since $r>r_1$, and $r$ is arbitrary by the Maximum Principle $u$ must be constant. We can conclude that
the surface is parabolic. Again, we must point out that our proof follows closely
the arguments given in Chapter 2, Section 12 in \cite{ProtterWeinberger}.

\section{A gap theorem for surfaces with nonnegative Gaussian curvature.}

It is an exercise in complex analysis to prove the following result. Given a holomorphic function $f$
such that
$\left|f\left(z\right)\right|\leq C\left|z\right|^k+B$ then $f$ is a polynomial of degree at most $\lfloor k\rfloor$. 
From this we can conclude that 
there are gaps between the possible growth that a holomorphic, and in consequence, harmonic functions
can have. For instance, a harmonic function of subquadratic growth (i.e., $k<2$) must be of at most linear growth
(i.e., $k$ must be less or equal to $1$). 

In this section we prove a gap theorem on the possible growth of harmonic 
functions on a complete noncompact surface $M$ of positive Gaussian curvature. To be more precise 
we will show the following gap theorem.
\begin{theorem}
\label{mainresult}
Let $g=dr^2+f\left(r,\theta\right)^2d\theta^2$ be a metric on $\mathbb{R}^2$ with nonnegative Gaussian
curvature. Let $u:M\longrightarrow \mathbb{R}$ be a harmonic function. 
If for all $\delta>0$
\[
\liminf_{r\rightarrow\infty} \frac{M\left(u;r\right)}{r^{1+\delta}}=0,
\]
then there is a constant $C>0$ such that $\left|u\left(r,\theta\right)\right|\leq Cr$ (i.e., it must be of
linear growth).
\end{theorem}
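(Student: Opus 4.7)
The plan is to combine a Hadamard-type three-circles comparison with Yau's gradient estimate for harmonic functions on surfaces of nonnegative curvature.

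The preliminary step is to establish the pointwise bound $0\le f'/f\le 1/r$ valid for all $r>0$. This follows either from Sturm's comparison (taking $z(r)=r$) or, more directly, by noting that $K_g\ge 0$ gives $f''\le 0$, and together with $f(0)=0$, $f'(0^+)=1$, the function $g(r)=rf'-f$ has $g(0^+)=0$ and $g'(r)=rf''\le 0$, so $g\le 0$. An immediate consequence is that $\log r$ is $g$-superharmonic:
\[
\Delta_g\log r=-\frac{1}{r^2}+\frac{f'}{f}\cdot\frac{1}{r}\le 0.
\]
Comparing the subharmonic function $|u|$ (subharmonic by Kato's inequality, since $u$ is harmonic) on an annulus $R_1<r<R_2$ with the superharmonic barrier $A\log r+B$ tuned to match $M(u;R_1)$ and $M(u;R_2)$ on the boundary circles yields, via the maximum principle, the Hadamard-type three-circles inequality
\[
M(u;r)\le \frac{M(u;R_1)\log(R_2/r)+M(u;R_2)\log(r/R_1)}{\log(R_2/R_1)},\qquad R_1<r<R_2.
\]

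Next I would invoke Yau's gradient estimate. On a surface with $K_g\ge 0$, any harmonic function $u$ on $B(0,2R)$ satisfies
\[
|\nabla u|(p)\le \frac{C\,M(u;2R)}{R},\qquad p\in B(0,R),
\]
for some absolute constant $C$. Integrating along a minimizing geodesic from the origin to a point $p$ then gives
\[
|u(p)|\le |u(0)|+C\int_0^{r(p)}\frac{M(u;2t)}{t}\,dt.
\]

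The two tools are combined through a diagonal extraction. The hypothesis furnishes, for each $\delta>0$, a sequence $r_k(\delta)\to\infty$ along which $M(u;r_k)=o(r_k^{1+\delta})$; a diagonal argument produces a single sequence $r_k\to\infty$ and $\delta_k\to 0^+$ with $M(u;r_k)=o(r_k^{1+\delta_k})$. Feeding this into Yau's estimate yields $|\nabla u|=o(r_k^{\delta_k})$ at scale $r_k/2$, which in the limit $\delta_k\to 0^+$ gives pointwise boundedness of $|\nabla u|$; a final integration along geodesics delivers $|u(p)|\le |u(0)|+Cr(p)$. The principal obstacle is making the diagonal argument rigorous: one must coordinate $r_k$ and $\delta_k$ carefully so that $r_k^{\delta_k}$ stays bounded, using the three-circles inequality to extend the subsequential control to nearby radii. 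The cleanest route may well be by contradiction, assuming $\limsup_{r\to\infty}|\nabla u|=\infty$ and producing a violation of the hypothesis by combining the gradient and three-circles estimates.
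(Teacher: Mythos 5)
Your preliminary steps are fine (the bound $0\le f'/f\le 1/r$, the superharmonicity of $\log r$, the three-circles inequality for $|u|$, and the scale-invariant form $|\nabla u|\le CM(u;2R)/R$ of Yau's estimate all check out, and the last of these is also used, implicitly, in the paper). The genuine gap is the step ``coordinate $r_k$ and $\delta_k$ so that $r_k^{\delta_k}$ stays bounded'': this cannot be done from the hypothesis alone, and no amount of care with the diagonal extraction will fix it. The hypothesis is a statement about the growth function $M(u;\cdot)$ that is perfectly compatible with $M(u;r)/r\to\infty$; for instance $M(r)=r\log r$ satisfies $\liminf_r M(r)/r^{1+\delta}=0$ for every $\delta>0$ (indeed the full limit is $0$) and is even convex in $\log r$, so it is consistent with your Hadamard three-circles inequality as well. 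For such growth, Yau's estimate along the sequences provided by the hypothesis only yields $|\nabla u|\le C\,M(u;r_k)/r_k$ on $B_{r_k/2}$ with $M(u;r_k)/r_k$ unbounded, so the claimed pointwise boundedness of $|\nabla u|$, and with it the final integration along geodesics, does not follow. In other words, the theorem is not a soft interpolation statement about growth rates plus a gradient estimate; it requires a rigidity (Liouville-type) input applied to a quantity built from $\nabla u$, which your outline never supplies — the three-circles comparison is applied to $|u|$, which is the wrong function for this purpose.

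The paper's proof supplies exactly this missing ingredient. For a harmonic $u$ on a surface one has the identity
\[
\Delta \log\left(1+\left|\nabla u\right|^2\right)=
\frac{2\left|Hess\,u\right|^2+2K_g\left|\nabla u\right|^2\left(1+\left|\nabla u\right|^2\right)}{\left(1+\left|\nabla u\right|^2\right)^2},
\]
so $K_g\ge 0$ makes $v:=\log\left(1+\left|\nabla u\right|^2\right)$ subharmonic. Yau's estimate along the sequence furnished by the hypothesis for a given $\delta$ gives $\left|\nabla u\right|\le Cr_{\delta,k}^{\delta}$ on $B_{r_{\delta,k}/2}$, hence $M(v;r)\le C\delta\log r$ along a sequence; the logarithm is the crucial device that converts the power $r^{\delta}$ into $\delta\log r$, so that letting $\delta\to 0$ yields $\liminf_{r\to\infty}M(v;r)/\log r=0$ without any need to synchronize sequences. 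Then Theorem \ref{maintheorem} with $z(r)=r$ (so $h\sim\log r$) forces $v$ to be constant, hence $\left|\nabla u\right|$ is constant and $u$ has linear growth. If you want to salvage your outline, replace the diagonal argument by this Liouville step: keep your three-circles machinery, but apply it (or Theorem \ref{maintheorem} directly) to $\log\left(1+\left|\nabla u\right|^2\right)$ rather than to $|u|$.
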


Let us comment on the significance of Theorem \ref{mainresult} in view of what is known. Ni and Tam in \cite{NiTam} showed
that on a K{\"a}hler manifold if a harmonic function satisfies that
\[
\limsup_{r\rightarrow \infty}\frac{u\left(r,\theta\right)}{r^{1+\delta}}=0
\]
for all $\delta>0$ then it must be of linear growth. This estimate is sharp in the following sense:
It is known that there are complete noncompact surfaces of nonnegative Gaussian curvature that support
harmonic functions which grow like $r^{1+\delta}$ for $0<\delta<1$. Theorem \ref{mainresult} 
can be compared to Ni and Tam's theorem since in dimension 2 every orientable Riemannian manifold is K\"ahler.

\subsection{Proof of Theorem \ref{mainresult}.}

First we must recall, without a proof (one of which is via the Maximum Principle), a classical estimate due 
to Yau (see \cite{STYau}) for the gradient of a harmonic function. To simplify the notation, in what follows, we will suppress the
subindex $g$ indicating the dependence on the metric of the gradient and Laplace operator.

\begin{theorem}
Let $u:\Omega \subset \mathbb{R}^2\longrightarrow \mathbb{R}$, be a positive harmonic function, $\mathbb{R}^2$
being endowed with a Riemannian metric of positive curvature. Then the estimate
\[
\left|\nabla \log u\right|\leq \frac{C}{r}
\]
holds on any ball of radius $r$ contained in $\Omega$.
\end{theorem}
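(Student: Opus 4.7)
The plan is to show that $|\nabla u|$ is uniformly bounded on $M$; integrating along a minimizing geodesic from the origin to an arbitrary point $P$ then gives $|u(P)|\leq |u(0)|+\sup_M|\nabla u|\cdot d(0,P)$, which is exactly the claimed linear growth. The argument will combine three ingredients: the Bochner formula (which converts the curvature hypothesis into subharmonicity of $|\nabla u|^2$), the cited Yau gradient estimate (which controls $|\nabla u|$ in terms of the growth of $u$ itself), and the parabolicity of $M$ established in the previous section (which promotes ``bounded above'' to ``constant'' for subharmonic functions).

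The first two ingredients are straightforward. Bochner's formula in dimension two reads
\[
\tfrac12\Delta|\nabla u|^2=|\mathrm{Hess}(u)|^2+K_g|\nabla u|^2,
\]
so under $\Delta u=0$ and $K_g\geq 0$ the function $w:=|\nabla u|^2$ is subharmonic on $M$, whence by the maximum principle $N(r):=M(w;r)$ is non-decreasing in $r$. For the second ingredient, apply Yau's estimate to the positive harmonic function $v_R:=u+M(u;R)+1$ on $B_R$: for every $P$ with $|P|\leq R/2$ the ball $B(P,R/2)$ lies inside $B_R$, hence
\[
|\nabla u(P)|=|\nabla v_R(P)|\leq \frac{2C}{R}\,v_R(P)\leq \frac{4C}{R}\bigl(M(u;R)+1\bigr),
\]
and therefore $N(R/2)\leq 16C^2(M(u;R)+1)^2/R^2$. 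Specializing $R=r_k$ to the subsequence furnished by the hypothesis gives $N(r_k/2)=o(r_k^{2\delta})$ for every $\delta>0$.

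The crux is to deduce from this that $N$ is bounded above. The intended tool is the Hadamard three-circles inequality for the subharmonic function $w$, which on a surface of nonneg Gaussian curvature takes the form ``$N$ is convex in $\log r$'' (this uses that $\log r$ is superharmonic, which follows from the Laplacian comparison theorem applied to the comparison function $z(r)=r$, exactly as in the preceding section). Combining this convexity with the Yau bound along the subsequence and exploiting the full quantifier ``for every $\delta>0$'' should force the slope of $N$ in $\log r$ to vanish, so $N$ is bounded. Once this is achieved, the hypothesis $K_g\geq 0$ trivially satisfies the Greene--Wu--Milnor criterion $K_g\geq -1/(r^2\log r)$ for large $r$, so the surface is parabolic; hence $w$ is a subharmonic function bounded above and is therefore constant, giving $|\nabla u|\equiv c$ and completing the proof.

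The main obstacle I anticipate is precisely this third step. The Yau estimate yields, for each fixed $\delta>0$, a bound $N(r_k/2)\leq C_\delta r_k^{2\delta}$ along a $\delta$-dependent subsequence, and converting such a family of bounds into a single uniform bound on $N$ is not automatic. Overcoming this will presumably require a diagonal construction producing one sequence $r_k\to\infty$ that witnesses the hypothesis simultaneously for a vanishing sequence $\delta_k\to 0$, chosen at a rate controlled relative to $\log r_k$, and then interpolating via the convexity of $N$ in $\log r$ to rule out any unbounded growth.
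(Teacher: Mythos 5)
Your proposal does not prove the statement in question. The statement is Yau's gradient estimate itself: for a \emph{positive} harmonic function $u$ on a surface of nonnegative curvature, $\left|\nabla \log u\right|\leq C/r$ on any ball of radius $r$ contained in the domain. What you have written is instead an outline of the \emph{gap theorem} (Theorem \ref{mainresult} in the paper) --- that sub-$r^{1+\delta}$ growth forces linear growth --- and, crucially, your second ingredient explicitly ``applies Yau's estimate to the positive harmonic function $v_R$.'' Using the theorem to be proven as a black-box ingredient makes the argument circular as a proof of that theorem. Nothing in your outline engages with the actual content of the gradient estimate, namely the interior bound on $\left|\nabla \log u\right|$ obtained purely from positivity of $u$, harmonicity, and the curvature sign.

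For the record, the paper does not prove this statement either: it is recalled ``without a proof'' and attributed to Yau \cite{STYau}, with the remark that one proof goes through the Maximum Principle. A genuine proof would proceed along those lines: set $\varphi=\left|\nabla \log u\right|^2$, use Bochner's identity applied to $\log u$ (whose Laplacian is $-\varphi$) to derive a differential inequality of the form $\Delta\varphi\geq c\,\varphi^2-\langle\nabla\varphi,\nabla\log u\rangle$-type terms when $K_g\geq 0$, then multiply by a cutoff such as $(R^2-\rho^2)^2$ and examine the interior maximum of the product; the Laplacian comparison controls the error terms coming from the cutoff. None of this machinery appears in your proposal. Separately, even read as an attempt at the gap theorem, your route (three-circles convexity of $M(\left|\nabla u\right|^2;r)$ in $\log r$ plus a diagonal argument) differs from the paper's, which uses the identity for $\Delta\log\left(1+\left|\nabla u\right|^2\right)$ together with Theorem \ref{maintheorem} with $z(r)=r$, $h(r)=\log r$; but that comparison is moot until you supply a proof of the statement actually asked for.
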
 

Before engaging in the proof of Theorem \ref{mainresult}, we shall show a weaker result. 
To this end, let us introduce
Bochner's identity
\[
\frac{1}{2}\Delta \left|\nabla u\right|^2=\left|Hess\, u\right|^2+g\left(\nabla \Delta u, \nabla u\right)+K_gg\left(\nabla u,\nabla u\right).
\]
We recommend the reader non familiar with this formula to prove it in the case of $\mathbb{R}^n$, where
the term involving the curvature does not appear, and $g$ is the usual inner product. Recall that
in this case $Hess \, u$, the Hessian of $u$, is the matrix of second derivatives of $u$. The general case in a Riemannian manifold
follows from the nonconmutativity of the covariant derivatives, which is measured by the curvature
(for 
a proof of this formula see Lemma 1.36 and Exercise 1.37 in \cite{ChowLuNi}, and beware that
$Hess\, u= \nabla\nabla u$).

Now observe that if $u$ is harmonic and 
$K_g\geq 0$, then 
from Bochner's identity follows that 
$\left|\nabla u\right|^2$ is subharmonic. Let us assume that 
\[
\liminf_{r\rightarrow\infty} \frac{M\left(u;r\right)}{r\sqrt{\log r}}=0.
\]
Then, by Yau's estimate, 
\[
\liminf_{r\rightarrow\infty} \frac{M\left(\left|\nabla u\right|^2;r\right)}{\log r}=0
\]
so $\left|\nabla u\right|^2$ is constant
by Theorem \ref{maintheorem} (take $z\left(r\right)=r$, and the hypothesis hold since $K_g\geq 0$), and hence
$u$ must be of linear growth. 

However we can do much better: It
is time to give a proof of Theorem \ref{mainresult}. Following the work of Ni and Tam, instead
of using Bochner's identity,
 we will make use of the following 
identity, which is valid for any harmonic function $u$ defined on a surface:
\begin{equation}
\label{Bochner2}
\Delta \log \left(1+\left|\nabla u\right|^2\right)=
\frac{2\left|Hess \, u\right|^2+2K_g\left|\nabla u\right|^2\left(1+\left|\nabla u\right|^2\right)}{\left(1+\left|\nabla u\right|^2\right)^2}.
\end{equation}
Not being as well known as 
Bochner's, we shall give a proof of this identity in the last paragraphs of this paper, so let us then continue
with the proof of Theorem \ref{mainresult}.

Again, identity (\ref{Bochner2}) implies in the case of nonnegative curvature that $\log \left(1+\left|\nabla u\right|^2\right)$ is
subharmonic. Now use the hypothesis: If for $\delta>0$ 
\[
\liminf_{r\rightarrow \infty}\frac{M\left(u;r\right)}{r^{1+\delta}}=0,
\]
then by Yau's gradient estimate there is a sequence $r_{\delta,k}\rightarrow \infty$ such that
\[
\left|\nabla u\right| \leq Cr_{\delta,k}^{\delta}
\]
and hence,
\[
\log \left(1+\left|\nabla u\right|^2\right)\leq C\log{r_{\delta,k}^{\delta}}.
\]
Since $\delta>0$ can be as small as we wish, as a consequence we can deduce that
\[
\liminf_{r\rightarrow \infty} \frac{M\left(\log \left(1+\left|\nabla u\right|^2\right),r\right)}{\log r}=0
\]
and from Theorem \ref{maintheorem} we can conclude that $\log \left(1+\left|\nabla u\right|^2\right)$ is constant,
and hence that $\left|\nabla u\right|$ is constant, i.e., $u$ is of linear growth.

\subsection{Proof of formula (\ref{Bochner2}).}
As is customary,
we pick a local orthonormal frame $e_1,e_2$ around the point
where we will be performing our computations. A subindex $i$ will denote covariant
differentiation with respect to (or in the direction of, as you prefer) $e_i$. 
We shall enforce Einstein's summation condition, i.e., we add over repeated subindices.
For instance, we have:
\[
\Delta u=u_{jj}=u_{11}+u_{22}, \quad \mbox{and} \quad
\left|Hess\,u\right|^2=u_{ij}u_{ij}=u_{11}^2+u_{12}^2+u_{21}^2+u_{22}^2.
\]

We are ready to begin:
\begin{eqnarray*}
\left(\log\left(1+\left|\nabla u\right|^2\right)\right)_{jj}
&=&
\left(\frac{2}{1+\left|\nabla u\right|^2}u_{ij}u_i\right)_j\\
&=& -\frac{4}{\left(1+\left|\nabla u\right|^2\right)^2}u_{ij}u_{kj}u_iu_k\\
&&+\frac{2}{1+\left|\nabla u\right|^2}u_{ij}u_{ij}
 +\frac{2}{1+\left|\nabla u\right|^2}u_{ijj}u_i.
\end{eqnarray*}
The term $\displaystyle u_{ij}u_{kj}u_{i}u_{k}$ when written in expanded form is
\begin{equation*}
u_{11}^2u_1^2+2u_{12}u_{11}u_1u_2+u_{12}^2u_2^2+u_{22}^2u_2^2+2u_{21}u_{22}u_1u_2+u_{21}^2u_1^2,
\end{equation*}
which by using that $u_{11}=-u_{22}$ and $u_{21}=u_{12}$, reduces to
\[
u_{11}^2u_1^2+u_{12}^2u_2^2+u_{22}^2u_2^2+u_{21}^2u_1^2.
\]
Now we need to be a little bit careful. Notice that 
\begin{equation*}
u_{11}u_i= -u_{22}u_i,
\end{equation*}
so we get that
\begin{equation*}
u_{ii}^2\left|\nabla u\right|^2=u_{11}^2u_1^2+u_{22}^2u_2^2,
\end{equation*}
and hence
\[
\left(u_{11}^2+u_{22}^2\right)\left|\nabla u\right|^2=2\left(u_{11}^2u_1^2+u_{22}^2u_2^2\right),
\]
which leads to
\[
\left(1+\left|\nabla u\right|^2\right)u_{ij}u_{ij}=2\left(u_{11}^2u_1^2+u_{22}^2u_2^2\right)
+2u_{12}^2u_1^2+2u_{12}^2u_2^2 +u_{ij}u_{ij},
\]
from which we obtain
\[
-\frac{4}{\left(1+\left|\nabla u\right|^2\right)^2}u_{ij}u_{kj}u_iu_k
+\frac{2}{1+\left|\nabla u\right|^2}u_{ij}u_{ij}
=
\frac{2u_{ij}u_{ij}}{\left(1+\left|\nabla u\right|^2\right)^2}.
\]

It is from $\displaystyle u_{ijj}u_{i}$ that we obtain the curvature term. 
Indeed, the Ricci identity (Lemma 1.36 in \cite{ChowLuNi}) tells us that
\[
\Delta u_j= \left(\Delta u\right)_j+R_{ij}u_i,
\]
where $R_{ij}$ is the Ricci tensor of the metric. In the case of a surface, $R_{ij}=K_g g_{ij}$, 
$K_g$ being the Gaussian curvature,
and hence we have
that
\[
\frac{2}{1+\left|\nabla u\right|^2}u_{ijj}u_i=\frac{2}{1+\left|\nabla u\right|^2}R_{ki}u_ku_i=
\frac{2K_g\left|\nabla u\right|^2}{1+\left|\nabla u\right|^2},
\]
and formula (\ref{Bochner2}) follows.




\end{document}